\newcommand{\size}[1]{\left \vert #1 \right \vert}
\newcommand{\ceil}[1]{\left \lceil #1 \right \rceil}
\newcommand{\eps}{\varepsilon}
\newcommand{\E}{\mathbb{E}}
\newcommand{\prob}{\mathbb{P}}
\newcommand{\bin}{\mathrm{Bin}}
\newtheorem{theorem}{Theorem}[section]
\def\nul{\varnothing} 
\def\VEC#1#2#3{#1_{#2},\dots,#1_{#3}}
\def\PE#1#2#3{\prod_{#1=#2}^{#3}}
\def\CH#1#2{\binom{#1}{#2}}
\def\FR#1#2{\frac{#1}{#2}}
\def\FL#1{\left\lfloor{#1}\right\rfloor} 
\def\CL#1{\left\lceil{#1}\right\rceil}
\begin{document}

\title{To Catch a Falling Robber}

\author{
William B. Kinnersley\thanks{Department of Mathematics, University of Rhode
Island, Kingston, RI 02881, billk@uri.edu},\, 
Pawe\l{} Pra\l{}at\thanks{Department of Mathematics, Ryerson University,
Toronto, ON, Canada M5B 2K3, pralat@ryerson.ca. Research supported by NSERC and Ryerson University},\,and 
Douglas B. West\thanks{Departments of Mathematics, Zhejiang Normal University,
Jinhua, China 321004,
and  University of Illinois, Urbana, IL 61801, west@math.uiuc.edu.
Research supported by Recruitment Program of Foreign Experts,
1000 Talent Plan, State Administration of Foreign Experts Affairs, China.}
}

\date{}
\maketitle

\begin{abstract}
We consider a Cops-and-Robber game played on the subsets of an $n$-set.
The robber starts at the full set; the cops start at the empty set.
In each round, each cop moves up one level by gaining an element, and 
the robber moves down one level by discarding an element.  The question is
how many cops are needed to ensure catching the robber when the robber
reaches the middle level.  Alan Hill posed the problem and provided a lower
bound of $2^{n/2}$ for even $n$ and $\CH{n}{\CL{n/2}}2^{-\FL{n/2}}$ (which is asymptotic to $2^{\CL{n/2}}/\sqrt{\pi n/2}$) for odd $n$.
Until now, no nontrivial upper bound was known. In this paper, we prove an upper bound that is within a factor of $O(\ln n)$ of this 
lower bound.\\
Keywords: Cops-and-robber game; cop number; hypercube; $n$-dimensional cube
\end{abstract}

\baselineskip=16pt

\section{Introduction}
The game of {\it Cops-and-Robber} is a pursuit game on a graph.
In the classical form, there is one robber and some number of cops.
The players begin by occupying vertices, first the cops and then the robber;
multiple cops may simultaneously occupy the same vertex.
In each subsequent round, each cop and then the robber can move along an
edge to an adjacent vertex.  The cops win if at some point there is a cop
occupying the same vertex as the robber.  The {\it cop number} of a graph $G$,
written $c(G)$, is the least number of cops that can guarantee winning
(all players always know each others' positions).

The game of Cops-and-Robber was independently introduced by
Quilliot~\cite{Qui78} and by Nowakowski and Winkler~\cite{NW83}; both
papers characterized the graphs with cop number $1$.  The cop number as 
a graph invariant was then introduced by Aigner and Fromme~\cite{AF84}.
Analysis of the cop number is the central problem in the study of the game
and often is quite challenging.  The foremost open problem in the field is
Meyniel's conjecture that $c(G) = O(\sqrt{n})$ for every $n$-vertex connected
graph $G$ (first published in~\cite{F}).  
This problem has a relatively long history.  At present we know only that the
cop number is at most $n 2^{-(1+o(1))\sqrt{\log_2 n}}$ (still in $n^{1-o(1)}$)
for any connected graph on $n$ vertices.  This result was obtained
independently by Lu and Peng~\cite{lp}, Scott and Sudakov~\cite{ss}, and
Frieze, Krivelevich, and Loh~\cite{fkl} using probabilistic tools.  For
evidence supporting Meyniel's conjecture, it is natural to check first whether
random graphs provide easy counterexamples.  It is known that Meyniel's
conjecture passes this test for binomial random
graphs~\cite{bpw, bkl, lp2, PW_gnp} and for random $d$-regular
graphs~\cite{PW_gnd}: for connected graphs in these models, the conjecture
holds asymptotically almost surely.
For more background on Cops-and-Robber, see~\cite{BN11}.

We consider a variant of the Cops-and-Robber game on a hypercube, introduced in
the thesis of Alan Hill~\cite{Hil08}.  This variant restricts the initial
positions and the allowed moves.  The $n$-dimensional hypercube is the graph
with vertex set $\{0,1\}^n$ (the set of binary $n$-tuples) in which vertices
are adjacent if and only if they differ in one coordinate.  View the vertices
as subsets of $\{1, \dots,n\}$, and let the {\it $k$th level} consist of the 
$k$-sets -- that is, the vertices whose size as subsets is $k$.  We view 
$\nul$ as the ``bottom'' of the hypercube and $\{1, \dots, n\}$ as the ``top'',
and we say that $S$ lies {\it below} $T$ when $S \subseteq T$.

The robber starts at the full set $\{1,\dots,n\}$; the cops start at the
empty set $\nul$.  On the $k$th round, the cops all move from level $k-1$ to
level $k$, and then the robber moves from level $n+1-k$ to level $n-k$.
If the cops catch the robber, then they do so on round $\CL{n/2}$ at level
$\CL{n/2}$, when they move if $n$ is odd, and by the robber moving onto them if
$n$ is even.

Let $c_n$ denote the minimum number of cops that can guarantee winning the
game.  Hill~\cite{Hil08} provided the lower bound $2^{n/2}$ for even $n$ and
$\CH{n}{\CL{n/2}}2^{-\FL{n/2}}$ for odd $n$; the former bound exceeds the latter
by a factor of $\Theta(\sqrt{n})$. Note that here the cops have in
some sense only one chance to catch the robber, on the middle level.  When
the cops can chase the robber by moving both up and down,
the value is much smaller, with the cop number of the 
$n$-dimensional hypercube graph being $\CL{(n+1)/2}~\cite{MM87}$.

We begin with a proof of Hill's lower bound, since its ideas motivate our
arguments.  (The proof below is essentially Hill's original proof, albeit
presented more concisely.)  We then prove our result: an upper bound within a
factor of $O(\ln n)$ of this lower bound.

\bigskip 

\begin{theorem}[\cite{Hil08}]\label{lower}
$c_n \ge 
\left \{ \begin{array}{ll}
   2^{m},                   &n=2m;\\
   \binom{2m+1}{m+1}2^{-m}, &n=2m+1.                
\end{array} \right.$
\end{theorem}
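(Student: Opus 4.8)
The plan is to have the robber follow a simple oblivious randomized strategy and then close with a union bound over the cops. The robber fixes in advance a uniformly random linear order of $[n]$ and on round $k$ discards the $k$th element of that order; equivalently, on each round the robber discards a uniformly random element not yet discarded. The theorem follows once we show that against any fixed cop strategy, each individual cop stands on the robber's vertex at the moment of capture with probability at most $p$, where $p=2^{-m}$ when $n=2m$ and $p=\frac{2^m}{\binom{2m+1}{m+1}}$ when $n=2m+1$. Indeed, if there are $C$ cops with $Cp<1$, then for every cop strategy the union bound gives $\prob{\text{some cop ever meets the robber}}<1$, so some outcome of the robber's coin flips defeats all the cops simultaneously; hence $C$ cops cannot force a capture, and $c_n\ge 1/p$.

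The single-cop estimate is a short computation once positions are recoded as sets: after round $k$ the robber has discarded a $k$-set $D_k$ and occupies $[n]\setminus D_k$, while the cop occupies a $k$-set $Q_k$. At the round of capture the cop's set and the robber's discarded set have sizes summing to $n$, so the cop stands on the robber precisely when those two sets are disjoint; and since $D_1\subseteq D_2\subseteq\cdots$ and $Q_1\subseteq Q_2\subseteq\cdots$, capture requires $Q_k\cap D_k=\varnothing$ for every round $k$ on which the robber moves. Now condition on this disjointness having survived through round $k-1$; the cop moves first, and putting its new element into $D_{k-1}$ would make capture impossible, so we may assume it does not, whence all $k$ elements of $Q_k$ lie in the $(n-k+1)$-element set $[n]\setminus D_{k-1}$. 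The robber then discards a uniformly random element of that same set, and disjointness survives iff that element misses $Q_k$, with probability exactly $1-\frac{k}{n-k+1}$. The product of these factors over $k=1,\ldots,\lfloor n/2\rfloor$ collapses to a ratio of factorials, equal to $2^{-m}$ when $n=2m$ and to $\frac{2^m}{\binom{2m+1}{m+1}}$ when $n=2m+1$.

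To carry this out I would (1) make the set recoding precise and record that capture implies $Q_k\cap D_k=\varnothing$ for all rounds $k$ on which the robber moves; (2) verify that discarding in a uniformly random order makes each prefix $D_k$ a uniform $k$-subset and each new discard uniform over the elements still present given the past; (3) establish the per-round factor $1-\frac{k}{n-k+1}$, noting that if the cop does step into $D_{k-1}$ then its capture probability is $0$, so the bound still holds; (4) evaluate the products in the two parities; and (5) finish with the union bound. The step that most needs care is the turn order: for the per-round estimate the cop must commit its $k$th move before the robber's $k$th (random) discard is revealed, and in the odd case the capture check on round $m+1$ must come after the robber's last discard — both are exactly what the rules give, but this is the natural place for an off-by-one error to creep in. The remaining arithmetic is routine.
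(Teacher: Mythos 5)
Your proposal is correct, and it reaches the stated bound by a genuinely different (though closely related) route. The paper lets the robber play a deterministic, adaptive greedy strategy: at level $k$ each surviving cop's $k$-set offers $k$ ways to be evaded while the robber has $n-k+1$ choices, so by averaging some single deletion evades at least the fraction $\frac{k}{n-k+1}$ of the surviving cops; multiplying the factors $1-\frac{k}{n-k+1}$ over $k=1,\dots,m$ shows that with fewer than $2^m$ (resp.\ $2^{-m}\binom{2m+1}{m+1}$) cops, fewer than one cop survives to the middle level. You instead make the robber oblivious and random (discard in a uniformly random order), show that any single cop — moving first each round, hence committing $Q_k$ before the $k$th discard is revealed — maintains $Q_k\cap D_k=\varnothing$ with conditional probability exactly $1-\frac{k}{n-k+1}$ per round (or has capture probability $0$ if it ever steps into $D_{k-1}$), conclude that each cop captures with probability at most $p=\prod_{k=1}^m\bigl(1-\frac{k}{n-k+1}\bigr)$, and finish with a union bound and an averaging step to extract a winning robber play against any fixed cop strategy; your handling of the parity/turn-order bookkeeping (capture check after the robber's $m$th move when $n=2m$, after the cops' $(m+1)$st move when $n=2m+1$) and the product evaluations match the paper's. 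In effect the paper's greedy step is the derandomization, by averaging, of your per-round uniform choice, so the two arguments are two faces of the same computation; yours buys the slightly stronger conclusion that even an oblivious robber ignoring the cops succeeds with positive probability (and it mirrors the uniform-random-chain viewpoint the paper later uses for the upper bound), while the paper's version is fully constructive and avoids probability altogether.
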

\begin{proof}
After each move by the robber, some cops may no longer lie below the robber.
Such cops are effectively eliminated from the game.  We call them {\em evaded
cops}; cops not yet evaded are {\em surviving cops}.  

Consider the robber strategy that greedily evades as many cops as possible with
each move.  Deleting an element from the set at the robber's current position
evades all cops whose set contains that element.  On the $k$th round, the
surviving cops sit at sets of size $k$, and the robber has $n-k+1$ choices of
an element to delete.  Since each surviving cop can be evaded in $k$ ways,
the fraction of the surviving cops that the robber can evade on this move is
at least $\FR k{n-k+1}$.

After the first $m$ rounds, where $m=\FL{n/2}$, the fraction of the cops that
survive is at most $\prod_{i=1}^{m} \left ( 1 - \frac{i}{n-i+1} \right )$. 
When $n=2m$, we compute
$$
\PE i1m \left(1-\frac{i}{2m-i+1}\right)
= \PE i1m \frac{2m-2i+1}{2m-i+1}
= \frac{(2m)!}{(2m)!\cdot2^m} = 2^{-m}.
$$
When $n=2m+1$, we compute
$$
\prod_{i=1}^{m} \left(1-\frac{i}{2m-i+2} \right)
= \prod_{i=1}^{m} \frac{2m-2i+2}{2m-i+2}
= \frac{2^m m! (m+1)!}{(2m+1)!}
= 2^m \Big/ \binom{2m+1}{m+1}.
$$
For the cops to catch the robber, at least one surviving cop must remain after 
$m$ moves; this requires at least $2^m$ total cops when $n=2m$ and at least
$\binom{2m+1}{m+1}2^{-m}$ when $n=2m+1$.
\end{proof}

A similarly randomized strategy for the cops should produce a good upper bound.
However, it is difficult to control the deviations from expected behavior
over all the cops together.  Our strategy will group the play of the game into
phases that enable us to give essentially the same bound on undesirable
deviations in each phase.

\section{The Upper Bound}

If there are enough cops to cover the entire middle level, then the robber
cannot sneak through.  The size of the middle level is asymptotic to 
$2^n/\sqrt{\pi n/2}$.  This trivial upper bound is roughly the square of the
lower bound in Theorem~\ref{lower}.  When $n$ is odd, a slight improvement
follows by observing that one only needs to block each $(n+1)/2$-set by
reaching some $(n-1)/2$-set under it.  More substantial improvements use the
fact that as the robber starts to move, the family of sets needing to be
protected shrinks.

Our upper bound on $c_n$ is $O(\ln n)$ times the lower bound in
Theorem~\ref{lower}.  We use a randomized strategy for the cops; it may or may
not succeed in capturing the robber.  However, with sufficiently many cops, the
strategy succeeds {\em asymptotically almost surely} (or {\em a.a.s.}), that
is, with probability tending to 1 as $n$ tends to infinity.  Consequently,
some deterministic strategy for the cops (in response to the moves by
the robber) wins the game.

To analyze our cop strategy, we need a version of the well-known Chernoff Bound: 

\begin{theorem}[\cite{JLR}]\label{thm:Chernoff}
Let $X$ be a random variable expressed as the sum $\sum_{i=1}^n X_i$ of
independent indicator random variables $\VEC X1n$, where $X_i$ is a Bernoulli
random variable with expectation $p_i$ (the expectations need not be equal).
For $0 \le \eps \le 1$,
\begin{eqnarray*}
\prob\left [X \le (1-\eps)\E[X]\right ] &\le& \exp \left( - \frac{\eps^2 \E[X]}{2} \right).
\end{eqnarray*}
\end{theorem}

\nobreak
We are now ready to prove our result.

\bigskip 

\begin{theorem}\label{upper}
$c_n =
\left \{ \begin{array}{ll}
   O(2^{m}\ln n ),                   &n=2m;\\
   O(2^{-m}\binom{2m+1}{m+1}\ln n ), &n=2m+1.                
\end{array} \right.$
\end{theorem}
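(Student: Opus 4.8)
The plan is to use a randomized cop strategy and the probabilistic method: place the cops at random subsets chosen according to some distribution, let each cop follow a random increasing chain compatible with its target, and show that with the stated number of cops, every possible robber trajectory is blocked with positive probability. The lower-bound proof tells us the right target: if the robber plays the greedy evasion strategy, after $m=\FL{n/2}$ rounds the expected fraction of surviving cops is exactly the reciprocal of the lower bound, so a single cop following a uniformly random increasing chain survives all of the robber's evasions with probability $\approx 2^{-m}$ (resp. $2^m/\binom{2m+1}{m+1}$). The catch is that the robber does not commit to one trajectory; there are many possible robber paths, and we must beat all of them simultaneously. This is exactly the setting of a union bound, and the $\ln n$ factor will come from paying $\ln(\text{number of robber strategies})$, i.e. $O(n)$ distinct "threats," times a constant.

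Concretely, I would proceed as follows. First, reformulate what the cops must achieve: at round $m$ the cops occupy level $m$ (for $n=2m$) or move to level $\CL{n/2}$ (for $n=2m+1$); the robber, having made $m$ moves, sits at level $n-m$. For $n=2m$ the robber arrives at level $m$ and is caught iff some cop is at the robber's vertex; for $n=2m+1$ the cops move from level $m$ to level $m+1$ and catch the robber (then at level $m+1$) iff some cop lands on it. In either case, since the cops move first each round and the robber's move is visible only afterward, the cops must effectively have "covered" enough of the middle level to dominate all vertices the robber could still reach. The key reduction is that a cop currently at a set $S$ of size $k$ (after round $k$) can reach at round $m$ exactly those sets of size $\CL{n/2}$ that contain $S$; a robber at a set $T$ of size $n-k$ (after round $k$) reaches at the end exactly those middle-level sets contained in $T$. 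So a cop at $S$ catches a robber-path ending at middle set $M$ iff $S\subseteq M$, and we need: for every middle-level set $M$ that the robber could still be heading toward, some cop's chain passes through a subset of $M$ at the appropriate level — equivalently, some cop's terminal set equals a suitable subset of $M$.

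The heart of the argument is the following. Consider any fixed (adversarial, adaptive) robber strategy; it determines, as a function of the cops' moves, a terminal middle-level vertex. I claim that if each cop independently chooses a uniformly random maximal increasing chain $\nul = C_0 \subset C_1 \subset \cdots$ and walks along it (with a final adjustment at level $\CL{n/2}$ in the odd case), then the probability that a single cop is never evaded by the robber — i.e. at each round the cop's set still lies below the robber's set — is at least the reciprocal of the quantity in Theorem~\ref{lower}. This is essentially the computation already done in the lower-bound proof, read in the contrapositive: conditioned on the robber's moves so far, the cop's set at level $k$ is a uniformly random $k$-subset of the robber's current set at level $n-k+1$ (size $n-k+1$), so the cop survives round $k$ with probability exactly $\FR{n-k+1-k}{n-k+1}\cdot\frac{?}{?}$; multiplying over $k=1,\ldots,m$ telescopes to exactly $2^{-m}$ or $2^m/\binom{2m+1}{m+1}$ — one needs to verify the conditional-uniformity claim carefully, but it holds because extending a uniform $(k-1)$-subset of one set to a uniform $k$-subset, then restricting to a sub-superset, preserves uniformity. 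Hence with $N$ cops the probability that the robber evades all of them is at most $\bigl(1-p\bigr)^N \le e^{-pN}$ where $p$ is the reciprocal of the lower bound; taking $N = \CL{C p^{-1}\ln n}$ makes this at most $n^{-C}$.

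The remaining — and main — obstacle is the union bound over robber strategies. A priori the robber has doubly-exponentially many adaptive strategies, so $e^{-pN} < (\text{number of strategies})^{-1}$ would force $N$ to be exponentially larger, which is too weak. The resolution I would pursue: argue that it suffices to defeat a much smaller family of robber "threats." Specifically, fix the cops' random chains; the robber's optimal play is deterministic, so it is enough that, for each of the $\binom{n}{\CL{n/2}}$ middle-level sets $M$, \emph{if} the cops' configuration fails to block the path to $M$ then the robber could exploit it — but we do not need independence across $M$, only that the \emph{bad event for a fixed $M$} has probability $\le e^{-pN}$, and then union-bound over the $\binom{n}{\CL{n/2}}$ choices of $M$. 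That gives $N = O(p^{-1}\ln\binom{n}{\CL{n/2}}) = O(p^{-1}\cdot n)$, which is off by a factor $n$, not $\ln n$. To recover the claimed $O(\ln n)$, one instead shows the relevant bad events are highly correlated / can be organized round-by-round: at each round only $O(n)$ "new" evasion choices are available to the robber, so by a round-by-round coupling (revealing the cops' chains level by level and using that a surviving cop at level $k$ is uniform in the robber's current set) the probability that \emph{some} reachable middle set is left unblocked is at most $\sum_{k=1}^m (n-k+1)\cdot(1-p_k)^{N_k}$ for suitable partial survival probabilities $p_k$; choosing $N = O(p^{-1}\ln n)$ makes each term $o(1/m)$. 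Making this last step rigorous — correctly defining the intermediate "threat" events so that an $O(\ln n)$ union bound actually closes — is the crux, and I expect it to occupy the bulk of the proof; the rest is the telescoping identities from Theorem~\ref{lower} and routine estimates of $e^{-pN}$ and $\binom{n}{\CL{n/2}}$.
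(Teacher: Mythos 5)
Your plan correctly identifies both the single-cop survival computation (the telescoping product from Theorem~\ref{lower}) and the real difficulty, namely that a union bound over the robber's adaptive strategies, or over all $\binom{n}{\lceil n/2\rceil}$ middle-level targets, costs a factor of $n$ (or worse), not $\ln n$. But the resolution you sketch does not close this gap. The round-by-round expression $\sum_{k}(n-k+1)(1-p_k)^{N_k}$ is not a bound on the probability that some reachable middle-level set is left unblocked: controlling, for each round, the robber's $n-k+1$ possible deletions yields only a concentration statement about \emph{how many} cops survive, and a large number of surviving cops at the end does not imply coverage --- the survivors can coincide on few vertices, and in the even case the robber is caught only if every level-$m$ subset of his level-$(m+1)$ set is occupied by a cop. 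So what must be controlled at the end is coverage of a family of sets, not a count of survivors, and your sketch never supplies an event whose failure probability sums with only $O(\ln n)$ overhead and whose success forces capture.

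The missing idea (and the heart of the paper's proof) is a two-phase strategy exploiting the fact you mention only in passing: the family of middle-level sets that must be protected shrinks as the robber descends. In phase one, for rounds $k\le m-7$, each surviving cop adds a uniformly random element of $R-S$; a Chernoff bound plus a union bound over only the $n-k+1$ deletions available in round $k$ shows that with high probability the robber never evades more than a $(1+1/k^3)\frac{k}{n-k+1}$ fraction of survivors, the slack $1/k^3$ being chosen so that the accumulated loss is a convergent constant factor. This guarantees $\Omega(n^7\ln n)$ surviving cops, each uniformly distributed below the robber, when the cops reach level $m-7$. In phase two the robber sits at level $m+7$, so only $\binom{m+7}{7}=O(n^7)$ level-$m$ sets remain below him; each surviving cop now commits to a uniformly random chain into this polynomial-size family, and a coupon-collector style union bound over these $O(n^7)$ targets --- which is exactly what the $\ln n$ in the cop count pays for --- shows that every target is occupied with probability bounded away from zero, which forces capture (with the minor modification for odd $n$ of requiring a cop below each of the $O(n^7)$ sets at level $m+1$). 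Without the observation that stopping the adaptive phase a constant number of levels early shrinks the target family to polynomial size, your union bound cannot be made to close with only a $\ln n$ factor.
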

\begin{proof}
We consider the case $n=2m$ first, returning later to the case $n=2m+1$.

We will specify the number of cops later.  All the cops begin at $\nul$.  Let
$R$ be the current set occupied by the robber.  On his $k$th turn, for
$1 \le k \le m$, each surviving cop at set $C$ chooses the next element for his
set uniformly at random from among $R-C$.  We claim that, regardless of how the
robber moves, this cop strategy succeeds a.a.s.

To facilitate analysis of the cops' strategy, we introduce some notation and 
terminology.  Consider an instance of the game.  We say that this 
instance satisfies {\em property $P(t,a)$} if, after $t$ rounds, every $m$-set
below the robber also has at least $a$ cops at or below it.  Intuitively, the
$m$-sets below the robber are the places where the robber can potentially be
captured; property $P(t,a)$ means that each of them can be reached by at least
$a$ cops.

To show that the cop strategy a.a.s.\ captures the robber, we will show that,
no matter how the robber plays, a.a.s.\ property $P(t_i,a_i)$ holds for
specific choices of $t_i$ and $a_i$.  Let $r=\ceil{\log_2\log_2n}$, and for
$i \in \{0, \dots, r\}$ let $s_i = 2^{r-i}$ and $t_i = m - s_i$.  
Furthermore, let 
$$a_i = 1600\left ( \prod_{j=1}^{i} (1-\eps_j)\right )2^{s_i}\ln n, \quad \quad \text{ where } \eps_j = \sqrt{s_j/2^{s_j}}.$$
In particular, $a_0 = 1600\cdot 2^{2^r}\ln n$.  Note that always
\begin{align*}
\prod_{j=1}^i (1-\eps_j) &\ge \prod_{j=1}^r (1-\eps_j)\\ 
                         &\ge \exp\left (-2\sum_{j=1}^r\eps_j\right )\\
                         &\ge \exp \left (-2 \left (\sqrt{2^0/2^{2^0}} + \sqrt{2^1/2^{2^1}} + \sqrt{2^2/2^{2^2}}(1 + 1/2 + 1/4 + \dots)\right )\right )\\
												 &= \exp(-2\sqrt{2}-2) > 1/200,
\end{align*}
and hence $a_i \ge 8\cdot 2^{s_i}\ln n$.  (Above, the second inequality uses
the fact that $1-x \ge \exp(-2x)$ whenever $0 \le x \le 1/\sqrt{2}$, while the
third inequality uses the observation that $\eps_{j-1} \le \eps_j/2$ for
$0 \le j \le r-2$.) 
 
We play the game with $\ceil{3200 \cdot 2^{m} \ln n}$ cops.  We claim that
a.a.s.\ property $P(t_i,a_i)$ holds for all $i$ in $\{0, \dots, r\}$.  We also
claim that a.a.s.\ property $P(m,1)$ holds.  This ensures that in the final
round the cops can cover all vertices where the robber can move; hence they
win.

We break the game into $r+2$ {\em phases}.  Phase 0 consists of rounds $1$
through $t_0$.  For $i \in \{1, \dots, r\}$, Phase $i$ consists of rounds
$t_{i-1}+1$ through $t_i$.  Phase $r+1$ consists of the single round $t_r+1$.
Our analysis is inductive.  For Phase 0, we show that a.a.s. property
$P(t_0,a_0)$ holds.  When considering Phase $i$ for $1 \le i \le r$, we assume
that property $P(t_{i-1},a_{i-1})$ holds and show that a.a.s.\ property
$P(t_i,a_i)$ also holds.  Finally, for Phase $r+1$, we assume that property
$P(t_r,a_r)$ holds and show that a.a.s.\ the cops capture the robber.

We begin with Phase 0.  We claim that property $P(t_0,a_0)$ holds with
probability at least $1-1/n$, no matter how the robber moves.  Fix a sequence
of moves for the robber in the first $t_0$ rounds of the game, and fix a set
$S$ with $\size{S} = m$ that remains below the robber.  A particular cop
remains below $S$ if and only if his position contains only elements of $S$.
In round $i$, each cop below $S$ has already added $i-1$ such elements, and
$m-i+1$ others remain.  Since each surviving cop chooses a new element
uniformly from $2m-2i+2$ possibilities, the probability that a cop below $S$
remains below $S$ is $\frac{m-i+1}{2m-2i+2}$, which equals $1/2$.  Thus, a
given cop remains below $S$ after the first $t_0$ rounds with probability
$2^{-t_0}$.

Consequently, the number of cops remaining below $S$ after $t_0$ rounds is a
random variable $X$ with the binomial distribution
$\bin(\ceil{3200\cdot 2^{m}\ln n}, 2^{-t_0})$.  Recalling that $t_0 = m-s_0$
and that $s_0 = 2^r \ge \log_2 n$, we have 
$$\E[X] \ge 3200\cdot 2^{m}\ln n \cdot 2^{-t_0} = 3200 \cdot 2^{s_0} \ln n = 3200 \cdot 2^{2^r} \ln n = 2a_0.$$
The Chernoff Bound now yields 
$$\prob(X \le a_0) \le \prob\left (X \le \frac{\E[X]}{2}\right ) \le \exp\left (-\frac{(1/2)^2\E[X]}{2}\right ) < \exp(-3n\ln n).$$

Thus, the probability that fewer than $a_0$ cops remain below $S$ is less than
$\exp(-3n \ln n)$.  The number of such sets $S$ below the robber is less than
$2^n$, which is less than $\exp(n \ln n)$.  By the Union Bound, the probability
that some $m$-set below the robber has fewer than $a_0$ cops below it is thus
less than $\exp(-2n\ln n)$.  That is, for one sequence of moves by the robber,
property $P(t_0,a_0)$ fails to hold with probability at most $\exp(-2n\ln n)$.
The number of possible move sequences by the robber in Phase 0 is less than
$n^{t_0}$, which in turn is less than $\exp(n \ln n)$.  Again using the Union
Bound, the probability that some robber strategy causes property $P(t_0,a_0)$
to fail is less than $\exp(-n \ln n)$.  Thus property $P(t_0,a_0)$ holds with
probability more than $1-\exp(-n\ln n)$, which is more than $1-1/n$.

Next consider Phase $i$ with $1\le i\le r$, consisting of rounds $t_{i-1}+1$
through $t_i$.  Under the assumption that property $P(t_{i-1},a_{i-1})$ holds,
we claim that property $P(t_i,a_i)$ also holds with probability at least
$1-1/n$.  The argument is similar to that for Phase 0.  Fix a sequence of moves
for the robber in rounds $t_{i-1}+1$ through $t_i$, and fix an $m$-set $S$ that
remains below the robber after round $t_i$.  Again a cop below $S$ on a given
round remains below $S$ after that round with probability $1/2$.

By assumption, at least $a_{i-1}$ cops sat below $S$ at the beginning of Phase
$i$; the number of cops remaining below $S$ at the end of Phase $i$ is thus
bounded below by the random variable $X$ with binomial distribution
$\bin(\ceil{a_{i-1}}, 2^{-(t_i-t_{t-1})})$.  Hence
$$\E[X] \ge a_{i-1}2^{t_{i-1}-t_i} = a_{i-1}2^{s_i-s_{i-1}} = a_i \cdot \frac{a_{i-1}\cdot 2^{-s_{i-1}}}{a_i\cdot 2^{-s_i}} = \frac{a_i}{1-\eps_i}.$$
This time, the Chernoff Bound yields
\begin{align*}
\prob(X \le a_i) &\le \prob(X \le (1-\eps_i)\E[X]) \le \exp\left(-\frac{\eps_i^2 \cdot \E[X]}{2}\right ) \le \exp\left(-\frac{\eps_i^2 \cdot a_i}{2}\right )\\
              &\le \exp\left (-\frac{\eps_i^2 \cdot 8 \cdot 2^{s_i} \ln n}{2}\right ) = \exp(-4s_i \ln n).
\end{align*}
At the start of Phase $i$, the robber occupies level $n-t_{i-1}$.  At this time, 
the number of $m$-sets that lie below the robber is $\binom{n-t_{i-1}}{m}$.  
This simplifies to $\binom{m+s_{i-1}}{m}$, which is at most $n^{s_{i-1}}$; 
since $s_{i-1} = 2s_i$, this is at most $\exp(2s_i \ln n)$.  Likewise, the  
number of move sequences available to the robber during Phase $i$ is at most 
$n^{s_{i-1}-s_i}$, which simplifies
to $\exp(s_i\ln n)$.  Applying the Union Bound twice,
as in Phase $0$, we see that property $P(t_i,a_i)$ fails with probability at
most $\exp(-s_i\ln n)$.  Hence $P(t_i,a_i)$ holds with probability at least
$1-\exp(-s_i\ln n)$, which is at least $1-1/n$.

Finally, we show that if $P(t_r,a_r)$ holds, then $P(m,1)$ holds with
probability at least $1-1/n$.
Recall that $t_r = m-1$ and that $a_r \ge 16 \ln n$.  Each cop chooses from two
possible moves, each leading to an $m$-set.  The number of cops that remain
below an $m$-set $S$ is bounded from below by the random variable $X$ with
distribution $\bin(\ceil{a_{r}}, 1/2))$.  Now 
$$\prob(X = 0) = 2^{-\ceil{a_r}} \le 2^{-16 \ln n} \le \frac{1}{n^2},$$
so the probability that no cop reaches $S$ is at most $1/n^2$.  There are $m+1$
choices for $S$; by the Union Bound, $P(m,1)$ fails with probability less than
$1/n$.  Hence $P(m,1)$ holds with probability at least $1-1/n$, as claimed.

To complete the proof, we now consider the full game.  We want to show that
a.a.s. $P(m,1)$ holds.  The probability that $P(m,1)$ holds is bounded below by
the probability that $P(t_0,a_0), \ldots, P(t_r,a_r)$, and $P(m,1)$ all hold.
We have shown that $P(t_0,a_0)$ fails with probability at most $1/n$, that
$P(t_i,a_i)$ for $1\le i\le r$ fails with probability at most $1/n$ when
$P(t_{i-1},a_{i-1})$ holds, and that $P(m,1)$ fails with probability at most
$1/n$ when $P(t_r,a_r)$ holds.  By the Union Bound, the probability that some
property in this list fails is bounded above by $(r+2)/n$, which is at most
$2 \log_2 \log_2 n / n$ when $n$ is sufficiently large.  Thus the conjunction
of these properties (and in particular, property $P(m,1)$) holds with
probability at least $1-2\log_2\log_2 n/n$.  This completes the proof for the
case $n=2m$.

When $n=2m+1$, we define property $P(t,a)$ to mean that after $t$ rounds, at
least $a$ cops sit below each $(m+1)$-set that is below the robber.  It now
suffices to prove that $P(m,1)$ holds a.a.s., since any cop that remains below
the robber at the beginning of round $m+1$ can capture him.  The details of the
argument are nearly identical to the previous case, and we omit them.
\end{proof}

We remark that the cops can play more efficiently by using an appropriate
deterministic strategy in round $m$.  This does not improve the asymptotics of
our bound, but it does improve the leading constant.

\end{document}